\theoremstyle{plain}
\newtheorem {Proposition}{Proposition}[section]
\newtheorem {Lemma}[Proposition] {Lemma}
\newtheorem {Theorem}[Proposition]{Theorem}
\newtheorem {Corollary}[Proposition]{Corollary}
\theoremstyle{definition}
\newtheorem {Remark}[Proposition]{Remark}
\newtheorem {Example}{Example}[section]
\def\N{\mathbb{N}}
\def\R{\mathbb{R}}
\def\x{\mathbf{x}}
\def\v{\mathbf{v}}
\def\c{\mathbf{c}}
\def\X{\mathbf{X}}
\def\Y{\mathbf{Y}}
\def\cM{\mathcal{M}}
\def\cP{\mathcal{P}}
\def\cF{\mathcal{F}}
\DeclareMathOperator{\ep}{exp}
\DeclareMathOperator{\proj}{proj}
\DeclareMathOperator{\id}{Id}
\DeclareMathOperator*{\argmin}{arg\, min}
\renewenvironment{thebibliography}[1]{%
\begin{oldthebibliography}{#1}%
\setlength{\baselineskip}{.9em}
\linespread{.95}
\small
\setlength{\parskip}{.23ex}%
\setlength{\itemsep}{.15em}%
}%
{%
\end{oldthebibliography}%
}
\begin{document}

\title{\vspace{-1em}
Quantitative Convergence of Quadratically Regularized Linear Programs\footnote{The authors thank Roberto Cominetti, Andr{\'e}s Riveros Valdevenito and two anonymous referees for helpful comments.}}
\author{
  Alberto Gonz{\'a}lez-Sanz%
  \thanks{
  Columbia University, Department of Statistics, ag4855@columbia.edu.}
  \and
  Marcel Nutz%
  \thanks{
  Columbia University, Departments of Statistics and Mathematics, mnutz@columbia.edu. Research supported by NSF Grants DMS-1812661, DMS-2106056, DMS-2407074.}
  }
\date{\today}

\maketitle

\begin{abstract}
Linear programs with quadratic (``ridge'') regularization are of recent interest in optimal transport: unlike entropic regularization, the squared-norm penalty gives rise to sparse approximations of optimal transport couplings. More broadly, quadratic regularization is used in overparametrized learning problems to single out a particular solution. It is well known that the solution of a quadratically regularized linear program over any polytope converges stationarily to the minimal-norm solution of the linear program when the regularization parameter tends to zero. However, that result is merely qualitative. Our main result quantifies the convergence by specifying the exact threshold for the regularization parameter, after which the regularized solution also solves the linear program. Moreover, we bound the suboptimality of the regularized solution before the threshold. These results are complemented by a convergence rate for the regime of large regularization. We apply our general results to the setting of optimal transport, where we shed light on how the threshold and suboptimality depend on the number of data points.
\end{abstract}

\vspace{1.5em}

{\small
\noindent \emph{Keywords} Linear Program, Quadratic Regularization, Optimal Transport

\noindent \emph{AMS 2020 Subject Classification}
49N10;  %
49N05;  %
90C25 %
}
\vspace{.9em}

\section{Introduction} 

Let $\c\in\R^{d}$ and let $\mathcal{P}\subset \R^d$ be a polytope. Moreover, let $\langle \cdot , \cdot \rangle $ be an inner product on $\R^d$ and $\|\cdot\|$ its induced norm. We study the linear program 
\begin{align}{\tag{LP}}\label{LP}
\begin{split}
    \text{minimize}~~ \langle \c, \x \rangle \qquad
    \text{subject to}~~\x\in \mathcal{P}
\end{split}
\end{align}
and its quadratically regularized counterpart,
\begin{align}{\tag{QLP}}\label{QLP}
\begin{split}
    \text{minimize}~~ \langle \c, \x \rangle +\frac{\|\x\|^2}{\eta} \qquad
    \text{subject to}~~\x\in \mathcal{P}.
\end{split}
\end{align}
Here $\eta \in (0,\infty)$ is called the inverse regularization parameter (whereas $1/\eta$ is the regularization). In the limit $\eta \to\infty$ of small regularization, \eqref{QLP} converges to \eqref{LP}. More precisely, the unique solution $\x^{\eta}$ of \eqref{QLP} converges to a particular solution $\x^{*}$ of \eqref{LP}, namely the solution with smallest norm: 
$\x^* =\arg\min_{\x\in  \cM } \|\x\|^2 $, where $\cM $ denotes the set of minimizers of \eqref{LP}. Our main goal is to describe how quickly this convergence happens.

Linear programming and regularization are fundamental tools in data science. Many statistical methodologies, including for instance quantile regression \cite{Koenker1978},  statistical depths \cite{Hallin2010} or multivariate quantiles \cite{HallinDelBarrioCuestaAlbertosMatran.21}, rely on solving a linear program. Regularization by a quadratic penalty---also called ridge penalty due to its prominent application in ridge regression---is used in many statistical problems (e.g., regularized quantile regression \cite{LiBayesian2010}) but also in data science more broadly, for instance in overparametrized learning problems where the aim is to single out a particular solution~\cite{BartlettLongLugosiTsigler.20, Belkin.21, Zhou2024}.

The aforementioned convergence of the solution $\x^{\eta}$ of \eqref{QLP} to the minimum-norm solution~$\x^{*}$ of \eqref{LP} is stationary: there exists a threshold $\eta^{*}$ such that $\x^{\eta}=\x^{*}$ for all $\eta\geq \eta^{*}$. This was first established for linear programs in \cite[Theorem~1]{MangasarianMeyer.79} and \cite[Theorem~2.1]{Mangasarian.84}, and was more recently rediscovered in the context of optimal transport \cite[Property~5]{DesseinPapadakisRouas.18}. However, those results are qualitative: they do not give a value or a bound for~$\eta^{*}$. We shall characterize the exact value of the threshold~$\eta^{*}$ (cf.\ Theorem~\ref{th:main}), and show how this leads to computable bounds in applications. This exact result raises the question about the speed of convergence as $\eta\uparrow \eta^{*}$. Specifically, we are interested in the convergence of the error $
\mathcal{E}(\eta) = \langle \mathbf{c}, \x^{\eta} \rangle - \min_{\x \in \mathcal{P}} \langle \mathbf{c}, \x \rangle
$ measuring how suboptimal the solution $\x^\eta$ of \eqref{QLP} is when plugged into  \eqref{LP}. In Theorem~\ref{th:main}, we show that $
\mathcal{E}(\eta) = o(\eta^{*}-\eta)$ as $\eta\uparrow \eta^{*}$ and give an explicit bound for $\mathcal{E}(\eta)/(\eta^{*}-\eta)$. After observing that the curve $\eta\mapsto \x^{\eta}$ is piecewise affine, this linear rate can be understood as the slope of the last segment of the curve before ending at $\x^{*}$. Figure~\ref{fig:Motivatingexample} illustrates these quantities in a simple example. Our results for $\eta\to\infty$ are complemented by a convergence rate for the large regularization regime $\eta\to0$ where~$\x^{\eta}$ tends to $\arg\min_{\x\in  \mathcal{P}} \|\x\|^2 $; cf.\ Proposition~\ref{pr:largeRegularization}.

\begin{figure}[h!] 
		\centering
		\resizebox{.7\linewidth}{!}{%

\tikzset{every picture/.style={line width=0.75pt}} %

\begin{tikzpicture}[x=0.75pt,y=0.75pt,yscale=-1,xscale=1]
\draw (265.5,149) node  {\includegraphics[width=372.75pt,height=193.5pt]{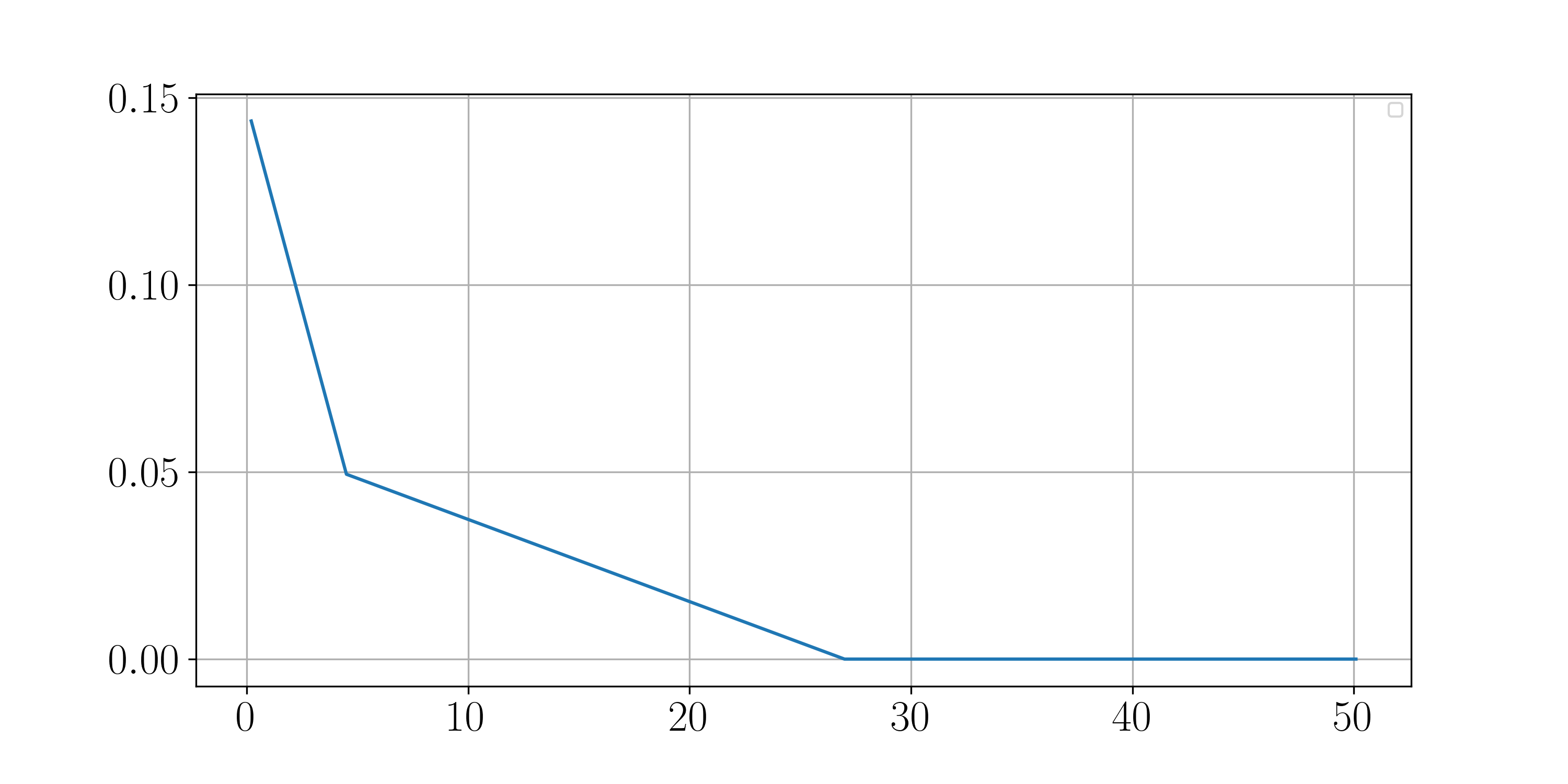}};
\draw  [dash pattern={on 0.84pt off 2.51pt}]  (127,176) -- (127,262) ;
\draw  [dash pattern={on 0.84pt off 2.51pt}]  (285,237) -- (285,263) ;

\draw (279,262.4) node [anchor=north west][inner sep=0.75pt]    {$\eta^{*}$};
\draw (22,138.4) node [anchor=north west][inner sep=0.75pt]    {$\mathcal{E}(\eta)$};
\draw (120,266.4) node [anchor=north west][inner sep=0.75pt]    {$\eta_{1}$};

\end{tikzpicture}
}%

\caption{Suboptimality $\mathcal{E}(\eta)$ of \eqref{QOT} when $\mu = \nu =\frac{1}{3} \sum_{i=1}^{3} \delta_{i/3}$ and $c(x,y) = \|x-y\|^2$. Theorem~\ref{th:main} characterizes the location of $\eta^{*}$ and bounds the slope to the left of $\eta^{*}$.}
\label{fig:Motivatingexample}
\end{figure}

While linear programs and their penalized counterparts go back far into the last century,
our interest 
is fueled by the surge of optimal transport in applications such as machine learning (e.g., \cite{KolouriParlEtAl.17survey}), statistics (e.g., \cite{PanaretosZemel.19}), language and image processing (e.g., \cite{AlvarezJaakkola.18, RubnerTomasiGuibas.00}) and economics (e.g., \cite{Galichon.16}). 
In its simplest form, the optimal transport problem between probability measures~$\mu$ and~$\nu$ is
\begin{equation}\tag{OT}\label{OTCONt}
\inf_{\gamma\in \Gamma(\mu, \nu)} \int c(x,y) d\gamma(x,y), 
 \end{equation}
where $\Gamma(\mu, \nu)$ denotes the set of couplings; i.e., probability measures $\gamma$ with marginals  $\mu$ and $\nu$ (see \cite{Villani.03,Villani.09}  for an in-depth exposition). Here $c(\cdot,\cdot)$ is a given cost function, most commonly $c(x,y)=\|x-y\|^2$. In many applications the marginals represent observed data: data points ${\X_1, \dots, \X_N}$ and ${\Y_1, \dots, \Y_N}$ are encoded in their empirical measures $\mu=\frac1N \sum_{i} \delta_{\X_i}$ and $\nu=\frac1N \sum_{i} \delta_{\Y_i}$. Writing also $\c_{ij}=c(\X_{i},\Y_{j})$, the problem \eqref{OTCONt} is a particular case of \eqref{LP} in dimension $d=N\times N$. The general linear program \eqref{LP} also includes other transport problems of recent interest, such as multi-marginal optimal transport and Wasserstein barycenters~\cite{AguehCarlier.11}, adapted Wasserstein distances~\cite{BackhoffBartlBeiglbockEder.20} or martingale optimal transport~\cite{BeiglbockHenryLaborderePenkner.11}. 

As the optimal transport problem is computationally costly (e.g., \cite{PeyreCuturi.19}), \cite{Cuturi.13}  proposed to regularize~\eqref{OTCONt} by penalizing with Kullback--Leibler divergence (entropy). Then, solutions can be computed using the Sinkhorn--Knopp (or IPFP) algorithm, which has lead to an explosion of high-dimensional applications. 
Entropic regularization always leads to ``dense'' solutions (couplings whose support contains all data pairs $(\X_i,\Y_j)$) even though the unregularized problem~\eqref{OTCONt}  typically has a sparse solution. In some applications that is undesirable; for instance, it may correspond to blurrier images in an image processing task~\cite{blondel18quadratic}. For that reason, \cite{blondel18quadratic} suggested the quadratic penalization
\begin{equation}
    {\tag{QOT}}\label{QOT}
    \inf_{\gamma\in \Gamma(\mu, \nu)} \int c(x,y)d\gamma(x,y)+\frac{1}{\eta}  \left\| \frac{d\gamma}{  d(\mu \otimes \nu) }\right\|_{L^2( \mu \otimes \nu)}^2
\end{equation} 
where $ d\gamma/  d(\mu \otimes \nu) $ denotes the density of $\gamma$ with respect to the  product measure $\mu \otimes \nu$. See also \cite{EssidSolomon.18} for  a similar formulation of minimum-cost flow problems, the predecessors referenced therein, and \cite{DesseinPapadakisRouas.18} for optimal transport with more general convex regularization. Quadratic regularization gives rise to sparse solutions (see \cite{blondel18quadratic}, and \cite{GonzalezSanzNutz.24b, GonzalezSanzNutzRiveros.24, Nutz.24, WieselXu.24} for recent theoretical results). Applications of quadratically regularized optimal transport include manifold learning~\cite{ZhangMordantMatsumotoSchiebinger.23} and image processing \cite{LiGenevayYurochkinSolomon.20} while \cite{Mordant.23} establishes a connection to maximum likelihood estimation of Gaussian mixtures. Computational approaches are developed in \cite{EcksteinKupper.21, GeneveyEtAl.16, GulrajaniAhmedArjovskyDumoulinCourville.17, LiGenevayYurochkinSolomon.20, seguy2018large} whereas \cite{LorenzMannsMeyer.21, DiMarinoGerolin.20b, BayraktarEcksteinZhang.22, Nutz.24} study theoretical aspects with a focus on continuous problems. In that context, \cite{LorenzMahler.22, EcksteinNutz.22} show Gamma convergence to the unregularized optimal transport problem in the small regularization limit. Those results are straightforward in the discrete case considered in the present work. Conversely, the stationary convergence studied here does not take place in the continuous case.

For linear programs with entropic regularization, \cite{CominettiSanMartin.94} established that solutions converge exponentially to the limiting unregularized counterpart.  More recently, \cite{Weed.18} gave an explicit bound for the convergence rate. The picture for entropic regularization is quite different to quadratic regularization as the convergence is not stationary. For instance, in optimal transport, the support of the regularized solution contains all data pairs for any value of the regularization parameter, collapsing only at the unregularized limit. Nevertheless, our analysis benefits from some of the technical ideas in \cite{Weed.18}, specifically for the proof of the slope bound~\eqref{eq:SlopeBoundInThm}. The small regularization limit has also attracted a lot of attention in continuous optimal transport (e.g., \cite{AltschulerNilesWeedStromme.21,BerntonGhosalNutz.21, CarlierDuvalPeyreSchmitzer.17, ConfortiTamanini.19,Leonard.12, NutzWiesel.21, Pal.19}) which however is technically less related to the present work.

The remainder of this note is organized as follows. Section~\ref{se:main} contains the main results on the general linear program and its quadratic regularization, Section~\ref{se:OT} the application to optimal transport. Proofs are gathered in Section~\ref{se:proofs}.

\section{Main Results}\label{se:main}

Throughout, $\emptyset\neq\mathcal{P}\subset\R^{d}$ denotes a polytope. That is, $\mathcal{P}$ is the convex hull of its extreme points (or vertices) $\ep(\cP)=\{ \v_1, \dots, \v_K\}$, which are in turn minimal with the property of spanning~$\cP$ (see \cite{Brondsted.83} for detailed definitions). 
We recall the linear program \eqref{LP} and its quadratically penalized version \eqref{QLP} as defined in the Introduction, and in particular their cost vector $\c\in\R^{d}$. The set of minimizers of~\eqref{LP} is denoted
$$ \cM =\cM (\mathcal{P}, \c)=\argmin_{\x\in \mathcal{P}}\langle \c, \x \rangle;$$
it is again a polytope. To avoid a degenerate problem, we assume throughout that the projection of the origin onto $\mathcal{P}$ is not a minimizer of~\eqref{LP}. (If it is a minimizer of~\eqref{LP}, then it is also the minimizer of \eqref{QLP} for any $\eta$, so that our problem is trivial.) We abbreviate the objective function of \eqref{QLP} as
$$
    \Phi_\eta (\x) =  \langle \c, \x \rangle +\frac{\|\x\|^2}{\eta}.
$$
In view of $\Phi_\eta (\x)= \frac{1}{\eta} \left\|  \x + \frac{ \eta \, \mathbf{c} }{2} \right\|^2 - \frac{\eta}{4}\|\mathbf{c}\|^2$, minimizing $\Phi_\eta (\x)$ over $\mathcal{P}$ is equivalent to projecting $-\eta \c/2$ onto $\mathcal{P}$ in the Hilbert space $(\R^{d}, \langle \cdot , \cdot \rangle )$. The projection theorem (e.g., \cite[Theorem~5.2]{Brezis.11}) thus implies the following result. We denote by \({\rm ri}(C)\)  the relative interior of a set \(C \subset \mathbb{R}^d\); i.e, the topological interior when \(C\) is considered as a subset of its affine hull.

\begin{Lemma}\label{Lemma:projection}
  Given $\eta>0$, \eqref{QLP} admits a unique minimizer $\x^{\eta}$. 
  It is characterized as the unique $\x^\eta\in \mathcal{P}$ such that
    $$
\left\langle -\frac{\eta \c}{2}-\x^\eta,  \x-\x^\eta \right\rangle \leq 0   \quad \text{for all } \x\in \mathcal{P} .
$$
In particular, if $\x^\eta\in {\rm ri}(C)$ for some convex set $C\subset\mathcal{P}$, then also
    $$
\left\langle -\frac{\eta \c}{2}-\x^\eta,  \x-\x^\eta \right\rangle  =0  \quad \text{for all } \x\in C .
$$
\end{Lemma}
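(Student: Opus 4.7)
The plan is to use the Hilbert-space reformulation that the excerpt already sets up. Writing
$$\Phi_\eta(\x) = \frac{1}{\eta}\left\|\x + \frac{\eta\c}{2}\right\|^2 - \frac{\eta}{4}\|\c\|^2,$$
minimizing $\Phi_\eta$ over $\mathcal{P}$ is equivalent to finding the metric projection of $-\eta\c/2$ onto $\mathcal{P}$ in the Hilbert space $(\R^d,\langle\cdot,\cdot\rangle)$. Since $\mathcal{P}$ is non-empty, closed and convex, Hilbert projection (e.g.\ \cite[Theorem~5.2]{Brezis.11}) gives existence and uniqueness of $\x^\eta$ and the characterizing variational inequality
$$\left\langle -\frac{\eta\c}{2}-\x^\eta,\ \x-\x^\eta\right\rangle\le 0\quad\text{for all } \x\in\mathcal{P}.$$
This takes care of the existence, uniqueness and inequality parts of the statement.

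For the relative-interior claim, I would use the fact that if $\x^\eta\in\mathrm{ri}(C)$ and $\x\in C$, then by definition of the relative interior one can step slightly past $\x^\eta$ in the direction opposite to $\x-\x^\eta$ while remaining in $C$: there exists $\varepsilon>0$ such that $\x':=\x^\eta-\varepsilon(\x-\x^\eta)=(1+\varepsilon)\x^\eta-\varepsilon\x\in C\subset\mathcal{P}$. Applying the variational inequality at $\x'$ yields
$$0\ge\left\langle -\frac{\eta\c}{2}-\x^\eta,\ \x'-\x^\eta\right\rangle = -\varepsilon\left\langle -\frac{\eta\c}{2}-\x^\eta,\ \x-\x^\eta\right\rangle,$$
so the reverse inequality also holds, forcing equality.

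There is no serious obstacle in either step; the whole lemma is essentially a rewording of the projection theorem together with a standard relative-interior argument. The only point that deserves a line of care is the choice of the auxiliary point $\x'\in C$, which relies on the property of $\mathrm{ri}(C)$ that segments through an interior point can be extended slightly on either side without leaving $C$; this is where the assumption $\x^\eta\in\mathrm{ri}(C)$ is used (a boundary point of $C$ would not allow the step in the opposite direction).
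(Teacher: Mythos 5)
Your proposal is correct and mirrors the paper's reasoning: the paper also derives existence, uniqueness, and the variational inequality by rewriting $\Phi_\eta$ as a squared distance and invoking the Hilbert-space projection theorem of Brezis. The relative-interior step, which the paper leaves implicit, is handled correctly in your argument via the standard prolongation property of $\mathrm{ri}(C)$ (for $\x^\eta\in\mathrm{ri}(C)$ and $\x\in C$ one can move a small amount past $\x^\eta$ in the direction $\x^\eta-\x$ while staying in $C$), which turns the inequality into an equality.
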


Figure~\ref{fig:projection} illustrates how $\x^{\eta}$ is obtained as the projection of $-\eta \c/2$. %
The algorithm of~\cite{HagerZhang.16} solves the problem of projecting a point onto a polyhedron, hence can be used to find~$\x^{\eta}$ numerically.

\begin{figure}[tbh] 
\centering
		\resizebox{.5\linewidth}{!}{%
		\tikzset{every picture/.style={line width=0.75pt}} %
		\begin{tikzpicture}[x=0.75pt,y=0.75pt,yscale=-1,xscale=1]
\draw [line width=1.5]    (212,240) -- (414.17,37.83) ;
\draw [shift={(417,35)}, rotate = 135] [fill={rgb, 255:red, 0; green, 0; blue, 0 }  ][line width=0.08]  [draw opacity=0] (11.61,-5.58) -- (0,0) -- (11.61,5.58) -- cycle    ;
\draw  [line width=1.5]  (282,211) -- (341,211) -- (341,270) -- (282,270) -- cycle ;
\draw  [dash pattern={on 0.84pt off 2.51pt}]  (212,240) -- (280,240) ;
\draw [shift={(282,240)}, rotate = 180] [color={rgb, 255:red, 0; green, 0; blue, 0 }  ][line width=0.75]    (10.93,-3.29) .. controls (6.95,-1.4) and (3.31,-0.3) .. (0,0) .. controls (3.31,0.3) and (6.95,1.4) .. (10.93,3.29)   ;
\draw  [dash pattern={on 0.84pt off 2.51pt}]  (241,211) -- (279,211) ;
\draw [shift={(281,211)}, rotate = 180] [color={rgb, 255:red, 0; green, 0; blue, 0 }  ][line width=0.75]    (10.93,-3.29) .. controls (6.95,-1.4) and (3.31,-0.3) .. (0,0) .. controls (3.31,0.3) and (6.95,1.4) .. (10.93,3.29)   ;
\draw  [dash pattern={on 0.84pt off 2.51pt}]  (282,170) -- (282,209) ;
\draw [shift={(282,211)}, rotate = 270] [color={rgb, 255:red, 0; green, 0; blue, 0 }  ][line width=0.75]    (10.93,-3.29) .. controls (6.95,-1.4) and (3.31,-0.3) .. (0,0) .. controls (3.31,0.3) and (6.95,1.4) .. (10.93,3.29)   ;
\draw  [dash pattern={on 0.84pt off 2.51pt}]  (311.5,141) -- (311.5,208.5) ;
\draw [shift={(311.5,210.5)}, rotate = 270] [color={rgb, 255:red, 0; green, 0; blue, 0 }  ][line width=0.75]    (10.93,-3.29) .. controls (6.95,-1.4) and (3.31,-0.3) .. (0,0) .. controls (3.31,0.3) and (6.95,1.4) .. (10.93,3.29)   ;
\draw  [dash pattern={on 0.84pt off 2.51pt}]  (341,112) -- (341,209) ;
\draw [shift={(341,211)}, rotate = 270] [color={rgb, 255:red, 0; green, 0; blue, 0 }  ][line width=0.75]    (10.93,-3.29) .. controls (6.95,-1.4) and (3.31,-0.3) .. (0,0) .. controls (3.31,0.3) and (6.95,1.4) .. (10.93,3.29)   ;
\draw  [dash pattern={on 0.84pt off 2.51pt}]  (371,81) -- (341.45,209.05) ;
\draw [shift={(341,211)}, rotate = 282.99] [color={rgb, 255:red, 0; green, 0; blue, 0 }  ][line width=0.75]    (10.93,-3.29) .. controls (6.95,-1.4) and (3.31,-0.3) .. (0,0) .. controls (3.31,0.3) and (6.95,1.4) .. (10.93,3.29)   ;
\draw  [dash pattern={on 0.84pt off 2.51pt}]  (261,190) -- (280.59,209.59) ;
\draw [shift={(282,211)}, rotate = 225] [color={rgb, 255:red, 0; green, 0; blue, 0 }  ][line width=0.75]    (10.93,-3.29) .. controls (6.95,-1.4) and (3.31,-0.3) .. (0,0) .. controls (3.31,0.3) and (6.95,1.4) .. (10.93,3.29)   ;
\draw  [dash pattern={on 0.84pt off 2.51pt}]  (404,48) -- (341.72,209.13) ;
\draw [shift={(341,211)}, rotate = 291.13] [color={rgb, 255:red, 0; green, 0; blue, 0 }  ][line width=0.75]    (10.93,-3.29) .. controls (6.95,-1.4) and (3.31,-0.3) .. (0,0) .. controls (3.31,0.3) and (6.95,1.4) .. (10.93,3.29)   ;

\draw (171,233.4) node [anchor=north west][inner sep=0.75pt]    {$\eta =0$};
\draw (221,176.4) node [anchor=north west][inner sep=0.75pt]    {$\eta =1$};
\draw (271,128.4) node [anchor=north west][inner sep=0.75pt]    {$\eta =2$};
\draw (301,94.4) node [anchor=north west][inner sep=0.75pt]    {$\eta =\eta ^{*}$};
\draw (419,45.4) node [anchor=north west][inner sep=0.75pt]    {$( -\eta \c/2)_{\eta \geq 0}{}$};
\draw (305,233.4) node [anchor=north west][inner sep=0.75pt]    {$\mathcal{P}$};
\draw (348,203.4) node [anchor=north west][inner sep=0.75pt]    {$\x^{*}$};

\end{tikzpicture}
}%

\caption{The minimizer $\x^{\eta}$ of \eqref{QLP} is the projection of $-\eta \c/2$ onto~$\mathcal{P}$. The curve $\eta\mapsto \x^{\eta}$ is piecewise affine and converges stationarily to a point $\x^{*}$; i.e., $\x^{\eta}=\x^{*}$ for all $\eta\geq \eta^{*}$.}
\label{fig:projection}
\end{figure}
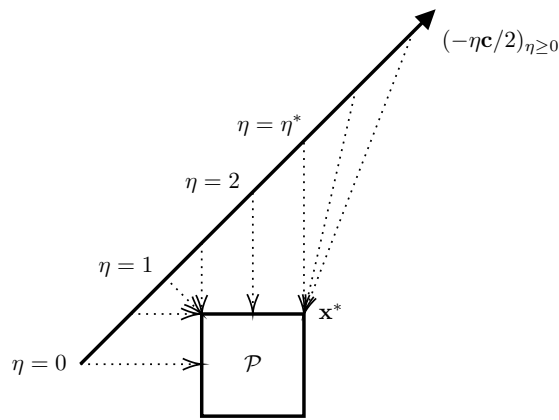

Next, we are interested in the error or \emph{suboptimality}
\begin{equation}\label{eq:subOpt}
\mathcal{E}(\eta) = \langle \mathbf{c}, \x^{\eta} \rangle - \min_{\x \in \mathcal{P}} \langle \mathbf{c}, \x \rangle
\end{equation}
measuring how suboptimal the solution $\x^\eta$ of \eqref{QLP} is when used as a feasible point in \eqref{LP}. It follows from the optimality of $\x^{\eta}$ for~\eqref{QLP} that $\eta\mapsto \mathcal{E}(\eta)$ is nonincreasing. (Figure~\ref{fig:projection} illustrates that it need not be strictly decreasing even on $[0,\eta^{*}]$). The optimality of $\x^{\eta}$ also implies that  $\mathcal{E}(\eta)\leq  \eta^{-1}(\|\x^*\|^2-\|\x^\eta\|^2)$; in fact, an analogous result holds for any regularization. The following improvement is particular to the quadratic penalty and will be important for our main result. 

\begin{Lemma}\label{le:hyperbolicDecay} Let $\x^\eta$ be the unique minimizer of \eqref{QLP} and let $\x^*$ be any minimizer of \eqref{LP}.  
Then 
$$
 \mathcal{E}(\eta)\leq  \frac{ \|\x^*\|^2-\|\x^\eta\|^2- \|\x^*-  \x^\eta \|^2  }{\eta}    \quad \text{for all } \eta>0. 
$$   
\end{Lemma}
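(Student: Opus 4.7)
The natural attempt is to just compare the values of $\Phi_\eta$ at $\x^\eta$ and $\x^*$, which would only yield the weaker bound $\mathcal{E}(\eta) \le (\|\x^*\|^2-\|\x^\eta\|^2)/\eta$ without the extra $\|\x^*-\x^\eta\|^2$ term. The improvement must therefore come from the sharper variational inequality in Lemma~\ref{Lemma:projection}, which exploits that $\x^\eta$ is not merely a minimizer but the projection of $-\eta\c/2$ onto $\mathcal{P}$.

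The plan is thus to apply Lemma~\ref{Lemma:projection} with the admissible test point $\x=\x^*\in\cM\subset\mathcal{P}$. This yields
$$
\left\langle -\frac{\eta\c}{2}-\x^\eta,\; \x^*-\x^\eta\right\rangle \le 0,
$$
which rearranges to
$$
\frac{\eta}{2}\,\langle \c,\x^\eta-\x^*\rangle \;\le\; \langle \x^\eta,\x^*\rangle - \|\x^\eta\|^2.
$$
Since $\x^*$ is a minimizer of \eqref{LP}, the left-hand side equals $\tfrac{\eta}{2}\mathcal{E}(\eta)$.

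To finish, I would invoke the polarization identity
$$
\langle \x^\eta,\x^*\rangle \;=\; \tfrac{1}{2}\bigl(\|\x^\eta\|^2+\|\x^*\|^2-\|\x^*-\x^\eta\|^2\bigr)
$$
to rewrite the right-hand side as $\tfrac{1}{2}\bigl(\|\x^*\|^2-\|\x^\eta\|^2-\|\x^*-\x^\eta\|^2\bigr)$. Dividing by $\eta/2$ then gives the claimed bound. There is essentially no obstacle here: once the variational inequality is applied with the correct test point, the remaining manipulation is a one-line Hilbert-space identity, and the extra negative term $-\|\x^*-\x^\eta\|^2/\eta$ arises automatically as the quadratic defect that the projection inequality provides over the mere minimality inequality.
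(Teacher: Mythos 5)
Your proof is correct and relies on exactly the same key ingredient as the paper's: the projection variational inequality from Lemma~\ref{Lemma:projection} applied with test point $\x^*$, followed by a polarization-type algebraic rearrangement. The paper packages this via the exact second-order expansion of $\Phi_\eta$ around $\x^\eta$ before specializing to $\x=\x^*$, but the two write-ups are algebraically equivalent.
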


\begin{Remark}\label{rk:hyperbolicDecay}
 The bound in Lemma~\ref{le:hyperbolicDecay} cannot be improved in general. Indeed, consider the example $\cP=[0,1]$ and $\c=-1$. Then $\x^{*}=1$ and $\x^{\eta}=\eta/2$ for $\eta\in(0,2]$, whereas $\x^{\eta}=\x^{*}$ for $\eta\geq2$. It is straightforward to check that the inequality in Lemma~\ref{le:hyperbolicDecay} is an equality for all $\eta>0$.
  
\end{Remark}

The next lemma details the piecewise linear nature of the curve $\eta\mapsto\x^{\eta}$. This result is known (even for some more general norms, see \cite{FinzelLi.00} and the references therein), and so is the stationary convergence \cite[Theorem~2.1]{Mangasarian.84}. For completeness, we detail a short proof in Section~\ref{se:proofs}.

\begin{Lemma}\label{le:pieceWiseAffine}
  Let $\x^\eta$ be the unique minimizer of \eqref{QLP}. The curve $\eta\mapsto\x^{\eta}$ is piecewise linear and converges stationarily to $\x^*=\arg\min_{\x\in  \cM } \|\x\|^2$ as $\eta\to\infty$. That is, there exist $n\in \N$ and  $$0=\eta_{0}<\eta_{1}<  \dots < \eta_{n}=: \eta^*$$ such that $[\eta_{i}, \eta_{i+1}]\ni \eta\mapsto \x^\eta$ is affine for every $i\in \{0, \dots, n-1\}$, and moreover, 
        $$\x^{\eta}=\x^* \quad \mbox{for all }\eta\geq \eta^{*}.$$ 
Correspondingly, the suboptimality $\mathcal{E}(\eta)=  \langle \c, \x^{\eta}-\x^{*} \rangle $ is also piecewise linear and converges stationarily to zero.
\end{Lemma}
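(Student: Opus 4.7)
I would extract both assertions directly from the projection characterization of Lemma~\ref{Lemma:projection}, exploiting the polyhedral structure of $\cP$.

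For the stationary convergence, I would first prove $\x^\eta=\x^*$ for all sufficiently large $\eta$. By Lemma~\ref{Lemma:projection}, this equality is equivalent to the variational inequality
\begin{equation*}
\eta\,\langle \c,\x-\x^*\rangle + 2\,\langle \x^*,\x-\x^*\rangle \geq 0 \qquad \text{for all } \x\in\cP,
\end{equation*}
and, being affine in $\x$, it need only be verified on the vertices $\v_1,\dots,\v_K$. For $\v_k\in\cM$ the first term vanishes while the second is nonnegative because $\x^*=\argmin_{\x\in\cM}\|\x\|^2$ is the orthogonal projection of the origin onto the polytope $\cM$ (apply Lemma~\ref{Lemma:projection} to $\cM$ with zero cost vector). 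For $\v_k\notin\cM$ the first term is strictly positive, so the inequality holds once $\eta$ exceeds $\max\{0,-2\langle \x^*,\v_k-\x^*\rangle/\langle \c,\v_k-\x^*\rangle\}$. Taking $\eta^*$ to be the maximum of these finitely many explicit thresholds yields $\x^\eta=\x^*$ for every $\eta\geq\eta^*$.

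For piecewise affinity, I would partition parameter space according to which face of $\cP$ contains $\x^\eta$ in its relative interior. For each nonempty face $F$ of $\cP$ let $I_F:=\{\eta>0:\x^\eta\in\mathrm{ri}(F)\}$. When $\eta\in I_F$, the equality clause of Lemma~\ref{Lemma:projection} applied with $C=F$ gives $\langle -\eta\c/2-\x^\eta,\x-\x^\eta\rangle=0$ for every $\x\in\aff(F)$, so $\x^\eta$ is the orthogonal projection of $-\eta\c/2$ onto $\aff(F)$; this projection is an affine function of $\eta$, which I denote $P_F(\eta)$. Membership in $I_F$ then reduces to a finite system of affine (strict and non-strict) inequalities in the single variable $\eta$: the strict inequalities expressing $P_F(\eta)\in\mathrm{ri}(F)$, together with $\langle -\eta\c/2-P_F(\eta),\v_k-P_F(\eta)\rangle\leq 0$ at each vertex $\v_k$ of $\cP$ (which by affinity in $\x$ guarantees $P_F(\eta)=\x^\eta$). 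Any one-variable affine inequality system has an interval as its solution set, so $I_F$ is an interval on which $\eta\mapsto\x^\eta$ is affine. Since $\cP$ has only finitely many faces and the $I_F$ partition $(0,\infty)$, I obtain the decomposition $0=\eta_0<\eta_1<\cdots<\eta_n=\eta^*$.

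Finally, $\mathcal{E}(\eta)=\langle \c,\x^\eta-\x^*\rangle$ (since $\x^*\in\cM$) inherits piecewise affinity from $\x^\eta$ and vanishes stationarily for $\eta\geq\eta^*$. I expect the main obstacle to be verifying that each $I_F$ is a single interval rather than a disjoint union; this is handled precisely by the observation above that all relevant conditions become affine in $\eta$ once one substitutes $\x^\eta=P_F(\eta)$.
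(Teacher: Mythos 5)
Your overall structure matches the paper's: you partition $(0,\infty)$ according to which face $F$ has $\x^\eta\in\mathrm{ri}(F)$, observe that $\x^\eta$ agrees with the (affine-in-$\eta$) projection onto $\aff(F)$ on each such set, and handle stationarity through the variational inequality of Lemma~\ref{Lemma:projection} evaluated at vertices. The stationary-convergence argument is essentially identical to the paper's Step~3 and is correct.

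There is, however, a gap in the step you yourself flag as the main obstacle. You claim that the certifying conditions for $\eta\in I_F$, namely $\langle -\eta\c/2-P_F(\eta),\v_k-P_F(\eta)\rangle\leq 0$, ``become affine in $\eta$ once one substitutes $\x^\eta=P_F(\eta)$.'' Taken at face value this is false: $P_F(\eta)$ is affine in $\eta$ and appears in \emph{both} arguments of the inner product, so the left-hand side is \emph{a priori} quadratic in $\eta$, and a quadratic inequality need not define an interval. What rescues the claim is an orthogonality observation you do not make: writing $P_F(\eta)=a+\eta\,b$ with $b\in\aff(F)-\aff(F)$, the residual $-\eta\c/2-P_F(\eta)$ lies in the orthogonal complement of $\aff(F)-\aff(F)$ for every $\eta$, hence $\langle -\eta\c/2-P_F(\eta),\,P_F(\eta)-a\rangle=0$ and one may replace $\v_k-P_F(\eta)$ by the \emph{fixed} vector $\v_k-a$ without changing the inner product; only then is the condition affine in $\eta$. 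Without this, your interval argument does not go through. The paper avoids this computation entirely by a cleaner convexity argument (its Step~1): if $\x^{\eta_{(1)}},\x^{\eta_{(2)}}\in\mathrm{ri}(\mathcal F)$, then for intermediate $\eta$ the point $\proj_{\aff(\mathcal F)}(-\eta\c/2)$ is a convex combination of $\x^{\eta_{(1)}}$ and $\x^{\eta_{(2)}}$, hence lies in $\mathrm{ri}(\mathcal F)$ and therefore coincides with $\x^\eta$; this shows directly that $I_F$ is convex with no inequality bookkeeping. You should either supply the orthogonality argument or switch to the paper's convexity route.
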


We can now state our main result for regime of small regularization: the threshold $\eta^{*}$ beyond which $\x^{\eta}=\x^*$ and a bound for the slope of the suboptimality $\mathcal{E}(\eta)$ of~\eqref{eq:subOpt} 
before the threshold. See Figures~\ref{fig:Motivatingexample} and~\ref{fig:projection} for illustrations. We recall that $\cM$ denotes the set of minimizers of \eqref{LP} and $\ep (\mathcal{P})$ denotes the extreme points of $\mathcal{P}$. 

\begin{Theorem}\label{th:main}
Let $\x^\eta$ be the unique minimizer of \eqref{QLP} and let $\x^*$ be the minimizer of \eqref{LP} with minimal norm, $\x^*=\arg\min_{\x\in  \cM } \|\x\|^2$. Let $0=\eta_{0}<\eta_{1}<  \dots < \eta_{n}=\eta^*$ be the breakpoints of the curve $\eta\mapsto\x^{\eta}$ as in Lemma~\ref{le:pieceWiseAffine}; in particular, $\eta^*$ is the threshold such that 
$\x^{\eta}=\x^*$ for all $\eta\geq \eta^{*}$.
  
\begin{enumerate}[label=(\alph*)]
        \item\label{CriticalPoint} The threshold $\eta^{*}$ is given by 
        \begin{align}\label{eq:critEta}
         \eta^*=   2\, \max_{ \x\in\ep (\mathcal{P})\setminus \cM } \frac{\left\langle  \x^*,  \x^*-\x \right\rangle}{\left\langle {\c},  \x-\x^* \right\rangle}.
        \end{align}
        The right-hand side attains its maximum on the set $\cM (\mathcal{P}, \c^*)$ of minimizers for the linear program~\eqref{LP} with the auxiliary cost  $ \c^*:=  \frac{\eta^* \c}{2} +\x^*$. Moreover, we have $\x^{\eta}\in  \cM (\mathcal{P}, \c^*)$ for all  $\eta\in [\eta_{n-1},\eta^*]$, so that $\eta^*=   2 \frac{\left\langle  \x^*,  \x^*-\x^{\eta} \right\rangle}{\left\langle {\c},  \x^{\eta}-\x^* \right\rangle}$ for all  $\eta\in [\eta_{n-1},\eta^*]$.
               
        \item\label{SlopeBoundInThm} The slope $\frac{\mathcal{E}(\eta)}{(\eta^*-\eta) }$ of the last segment of the curve $\eta\mapsto\mathcal{E}(\eta)$ satisfies the bound 
        \begin{align}\label{eq:SlopeBoundInThm}
        \frac{\mathcal{E}(\eta)}{(\eta^*-\eta) }
        \leq  \frac12 \left\langle \c, \frac{\x^*  -\x^{\eta_{n-1}}}{\|\x^*- \x^{\eta_{n-1}}\|}\right\rangle ^{2} 
        \leq \frac{\|\c\|^2}{2}, \qquad \eta\in  [\eta_{n-1}, \eta^*).
        \end{align}
    \end{enumerate}
\end{Theorem}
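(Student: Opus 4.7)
The overall plan is to build everything on top of the variational inequality for $\x^\eta$ from Lemma~\ref{Lemma:projection}. For part~\ref{CriticalPoint}, I apply that lemma to $\x^*$ itself: $\x^\eta=\x^*$ holds exactly when $\langle -\eta\c/2 - \x^*,\x-\x^*\rangle\le 0$ for all $\x\in\mathcal{P}$, equivalently
\[
\eta\langle \c,\x-\x^*\rangle\ge 2\langle \x^*,\x^*-\x\rangle \qquad \forall\, \x \in \mathcal{P}.
\]
When $\x\in\cM$ the left side vanishes and the right side is $\le 0$ because $\x^*$ is the projection of the origin onto the convex set $\cM$; for $\x\in\mathcal{P}\setminus\cM$ the denominator $\langle\c,\x-\x^*\rangle$ is strictly positive, so the condition becomes $\eta\ge$ the ratio in~\eqref{eq:critEta}. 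Thus $\eta^*$ is twice the supremum of that ratio over $\mathcal{P}\setminus\cM$. Since the ratio is linear-fractional with positive denominator on $\mathcal{P}\setminus\cM$, it is both quasiconvex and quasiconcave there, so its supremum on the polytope $\mathcal{P}$ is attained at a vertex outside $\cM$, yielding~\eqref{eq:critEta}. To identify the argmax, observe that equality at $\eta=\eta^*$ rearranges to $\langle \c^*,\x-\x^*\rangle=0$, while Lemma~\ref{Lemma:projection} at $\eta^*$ (where $\x^{\eta^*}=\x^*$) says that $\x^*$ minimizes $\langle \c^*,\cdot\rangle$ on $\mathcal{P}$; hence the argmax coincides with $\cM(\mathcal{P},\c^*)$.

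For the remaining claim of~\ref{CriticalPoint}---that the entire last segment lies in $\cM(\mathcal{P},\c^*)$---the key input is the equality clause of Lemma~\ref{Lemma:projection}. By Lemma~\ref{le:pieceWiseAffine}, for $\eta\in(\eta_{n-1},\eta^*)$ the point $\x^\eta$ lies in the relative interior of $[\x^{\eta_{n-1}},\x^*]\subset\mathcal{P}$; plugging both endpoints of that segment into the equality clause and subtracting gives
\[
\langle -\tfrac{\eta}{2}\c-\x^\eta,\,\x^{\eta_{n-1}}-\x^*\rangle=0,\qquad \eta\in(\eta_{n-1},\eta^*).
\]
Both sides are continuous in $\eta$, so the identity extends to both endpoints. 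At $\eta=\eta^*$ (where $\x^\eta=\x^*$) it becomes $\langle \c^*,\x^{\eta_{n-1}}-\x^*\rangle=0$, placing $\x^{\eta_{n-1}}\in\cM(\mathcal{P},\c^*)$; convexity of $\cM(\mathcal{P},\c^*)$ together with the affine parametrization of the last segment then gives the claim for every $\eta\in[\eta_{n-1},\eta^*]$.

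For~\ref{SlopeBoundInThm}, the affine interpolation on the last segment and $\mathcal{E}(\eta^*)=0$ yield
\[
\frac{\mathcal{E}(\eta)}{\eta^*-\eta}=\frac{\langle \c,\x^{\eta_{n-1}}-\x^*\rangle}{\eta^*-\eta_{n-1}},\qquad \eta\in[\eta_{n-1},\eta^*).
\]
Evaluating the preceding identity at the two endpoints $\eta=\eta_{n-1}$ and $\eta=\eta^*$ and subtracting produces
\[
\tfrac{\eta^*-\eta_{n-1}}{2}\langle \c,\x^{\eta_{n-1}}-\x^*\rangle=\|\x^{\eta_{n-1}}-\x^*\|^2.
\]
Combining these two displays rewrites the slope as $\tfrac12\langle \c,(\x^*-\x^{\eta_{n-1}})/\|\x^*-\x^{\eta_{n-1}}\|\rangle^2$ (with equality, in fact), and Cauchy--Schwarz delivers the final bound $\|\c\|^2/2$. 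The main obstacle throughout is this limiting step: the equality clause of Lemma~\ref{Lemma:projection} holds a priori only on the open interval $(\eta_{n-1},\eta^*)$, and extending it to the endpoints by continuity is what unlocks both the identification with $\cM(\mathcal{P},\c^*)$ in~\ref{CriticalPoint} and the sharp slope identity in~\ref{SlopeBoundInThm}.
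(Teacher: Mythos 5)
Your proof is correct and largely parallels the paper's, with one step that is more elegant and one step that is imprecisely justified.

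For part~\ref{CriticalPoint}, your reduction via Lemma~\ref{Lemma:projection} to the inequality $\eta\langle\c,\x-\x^*\rangle\ge 2\langle\x^*,\x^*-\x\rangle$ for all $\x\in\cP$, and the observation that vertices in $\cM$ are handled by the projection property of $\x^*$ onto $\cM$, matches the paper exactly. However, the phrase ``the ratio is linear-fractional \dots so its supremum on the polytope $\mathcal{P}$ is attained at a vertex outside $\cM$'' is not a valid justification as written: the domain $\cP\setminus\cM$ is not convex, and the function is undefined on $\cM$, so quasiconvexity is the wrong lens. The clean argument (which the paper uses, writing $\x=\sum_i\lambda_i\v_i$) is that the inequality $\eta\langle\c,\x-\x^*\rangle+2\langle\x^*,\x-\x^*\rangle\ge 0$ is \emph{affine in $\x$}, and hence holds on $\cP$ iff it holds at each vertex of $\cP$; vertices in $\cM$ are automatically fine by projection, and at vertices outside $\cM$ the denominator is positive, yielding~\eqref{eq:critEta}. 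Your conclusion is right, but you should replace the quasiconvexity appeal with the linearity argument. Your identification of the argmax with $\cM(\cP,\c^*)$ and the proof that $\x^\eta\in\cM(\cP,\c^*)$ on $[\eta_{n-1},\eta^*]$ via the equality clause of Lemma~\ref{Lemma:projection}, applied on the segment $C=[\x^{\eta_{n-1}},\x^*]$ and extended to the endpoints by continuity, are exactly as in the paper.

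For part~\ref{SlopeBoundInThm}, your argument is genuinely different from and cleaner than the paper's, and in fact proves more. The paper combines the identity at the endpoint $\eta^*$ with the inequality from Lemma~\ref{le:hyperbolicDecay} to get an upper bound on $\lambda$. You instead subtract the equality-clause identity at \emph{both} endpoints $\eta_{n-1}$ and $\eta^*$, obtaining the closed-form relation $\tfrac{\eta^*-\eta_{n-1}}{2}\langle\c,\x^{\eta_{n-1}}-\x^*\rangle=\|\x^{\eta_{n-1}}-\x^*\|^2$. Together with the elementary slope formula on the affine segment, this gives the first bound in~\eqref{eq:SlopeBoundInThm} \emph{with equality}, whereas the paper only asserts an inequality. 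Your route avoids Lemma~\ref{le:hyperbolicDecay} entirely and is a strict improvement in sharpness; I checked that your identity is consistent with (and indeed implied by) the paper's two endpoint relations. The final Cauchy--Schwarz step to $\|\c\|^2/2$ is the same.
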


It is worth noting that the first bound in~\eqref{eq:SlopeBoundInThm} is in terms of the \emph{angle} between~$\c$ and $\x^* - \x^{\eta_{n-1}}$. 
The formula~\eqref{eq:critEta} for $\eta^{*}$ is somewhat implicit in that it refers to $\x^{*}$. The following corollary states a bound for $\eta^{*}$ using similar quantities as \cite{Weed.18} uses for entropic regularization. In particular, 
we define the \emph{suboptimality gap} of $\mathcal{P}$  as 
$$\Delta := \min_{\x\in\ep(\mathcal{P}) \setminus \cM }\langle \c, \x  \rangle- \min_{\x\in \mathcal{P}}\langle \c, \x  \rangle = \min_{\x\in\ep(\mathcal{P}) \setminus \cM }\langle \c, \x - \x^{*}  \rangle ;$$
it measures the cost difference between the suboptimal and the optimal vertices of~$\cP$.

\begin{Corollary}\label{co:suboptgap}
Let $B=\sup_{\x\in\mathcal{P}} \|\x\|$ and $D=\sup_{\x,\x'\in\mathcal{P}} \|\x-\x'\|$ be the bound and diameter of $\mathcal{P}$, respectively.
Then 
$$
  \eta^{*} \leq \frac{2BD}{\Delta}.
$$
\end{Corollary}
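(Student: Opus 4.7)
The plan is to apply Theorem~\ref{th:main}\ref{CriticalPoint} and bound the numerator and denominator of each ratio separately using only the quantities $B$, $D$, and $\Delta$.

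Starting from the exact formula
$$
\eta^* = 2\max_{\x \in \ep(\cP)\setminus \cM} \frac{\langle \x^*, \x^*-\x\rangle}{\langle \c, \x-\x^*\rangle},
$$
I would first handle the numerator: by Cauchy--Schwarz,
$$
\langle \x^*, \x^*-\x\rangle \leq \|\x^*\|\,\|\x^*-\x\| \leq B\cdot D,
$$
since $\x^* \in \cP$ gives $\|\x^*\|\leq B$ and $\x,\x^*\in\cP$ gives $\|\x^*-\x\|\leq D$.

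For the denominator, recall that $\x^* \in \cM$, so $\langle \c, \x^*\rangle = \min_{\x\in\cP}\langle \c, \x\rangle$. Hence for any $\x\in\ep(\cP)\setminus\cM$,
$$
\langle \c, \x-\x^*\rangle = \langle \c,\x\rangle - \min_{\x'\in\cP}\langle\c,\x'\rangle \geq \Delta
$$
by the very definition of the suboptimality gap. Note in particular that $\Delta>0$, as vertices in $\ep(\cP)\setminus\cM$ are non-optimal and there are only finitely many extreme points.

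Combining these two bounds, each ratio in the maximum is at most $BD/\Delta$, yielding $\eta^*\leq 2BD/\Delta$. There is no real obstacle here; the corollary is essentially just a clean restatement of Theorem~\ref{th:main}\ref{CriticalPoint} in terms of geometric quantities that are typically easier to control in applications such as optimal transport.
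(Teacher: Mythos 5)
Your proof is correct and is exactly the intended argument; the paper omits a proof of this corollary as immediate from the formula in Theorem~\ref{th:main}\ref{CriticalPoint}. Cauchy--Schwarz bounds the numerator by $BD$, the definition of $\Delta$ bounds the denominator from below, and both bounds hold termwise so the maximum is controlled as you claim.
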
 

For integer programs, where $\c$ and the vertices of $\cP$ have integer coordinates, it is clear that $\Delta\geq1$. In general, the explicit computation of $\Delta$ is not obvious.  In Section~\ref{se:OT} below we shall find it more useful to directly use~\eqref{eq:critEta}.

We conclude this section with a quantitative result for  the regime $\eta\to 0$ of large regularization. After rescaling with $\eta$, the quadratically regularized linear program \eqref{QLP} formally tends to the quadratic program 
\begin{align}{\tag{QP}}\label{QP}
    \text{minimize}~~  {\|\x\|^2} \quad
    \text{subject to}~~ \x\in \mathcal{P}.
\end{align}
The unique solution $\x^0$ of~\eqref{QP} is simply the projection of the origin onto~$\cP$. It is known in several contexts that $\x^{\eta}\to\x^0$ as $\eta\to 0$ (e.g., \cite[Properties~2,7]{DesseinPapadakisRouas.18}). The following result quantifies this convergence by establishing that $\| \x^{\eta}-\x^0\|$ tends to zero at a linear rate.

\begin{Proposition}\label{pr:largeRegularization}
  Let $\x^\eta$ and $\x^0$ be the minimizers of \eqref{QLP} and \eqref{QP}, respectively. Then
  \begin{equation*}%
    \| \x^{\eta}-\x^0\|\leq \frac{1}{2} \|\c\| \eta   \quad \text{for all } \eta>0.
    \end{equation*}
\end{Proposition}

\begin{Remark}\label{rk:largeRegularizationSharp}
  The bound in Proposition~\ref{pr:largeRegularization} is sharp in the example $\cP=[0,1]$ and $\c=-1$. %
\end{Remark} 

\begin{Remark}\label{rk:largeRegularizationGeneral}
  Proposition~\ref{pr:largeRegularization} and its proof apply to an arbitrary closed, bounded convex set~$\cP$ in a Hilbert space, not necessarily a polytope. In particular, the bounds also hold for continuous optimal transport problems.
\end{Remark}

\section{Application to Optimal Transport}\label{se:OT}

Recall from the Introduction the optimal transport problem with cost function $c(\cdot,\cdot)$ between probability measures $\mu$ and $\nu$,
\begin{equation}\tag{OT}\label{OTCONt2}
\inf_{\gamma\in \Gamma(\mu, \nu)} \int c(x,y) d\gamma(x,y), 
 \end{equation}
where $\Gamma(\mu, \nu)$ denotes the set of couplings of $(\mu, \nu)$, and its quadratically regularized version
\begin{equation}
    {\tag{QOT}}\label{QOT2}
    \inf_{\gamma\in \Gamma(\mu, \nu)} \int c(x,y)d\gamma(x,y)+\frac{1}{\eta}  \left\| \frac{d\gamma}{  d(\mu \otimes \nu) }\right\|_{L^2( \mu \otimes \nu)}^2.
\end{equation} 
Throughout this section, we consider given points $\X_i,\Y_{i}$, $1\leq i\leq N$ (in $\R^{D}$, say) with their associated empirical measures and cost matrix
$$
\mu=\frac{1}{N}\sum_{i=1}^N \delta_{\X_i}, \qquad \nu=\frac{1}{N}\sum_{i=1}^N \delta_{\Y_i},\qquad C_{ij}:=c(\X_{i},\X_{j}).
$$
Any coupling $\gamma$ gives rise to a matrix $\gamma_{ij}=\gamma(\X_{i},\Y_{j})$ through its probability mass function. Those matrices form the set 
$$
  \Gamma_{N}=\{\gamma\in\R^{N\times N}:\, \gamma\, {\bf 1}  = N^{-1}{\bf 1},  \; \gamma^\top\, {\bf 1}=N^{-1} {\bf 1}, \; \gamma_{i,j}\geq 0\}.
$$
It is more standard to work instead with the \emph{Birkhoff polytope} of doubly stochastic matrices,
$$
  \Pi_{N}=\{\pi\in\R^{N\times N}:\, \pi\, {\bf 1}  =  {\bf 1},  \; \pi^\top\, {\bf 1} = {\bf 1}, \; \pi_{i,j}\geq 0\},
$$
that is obtained through the bijection $\pi_{ij}=N\gamma_{ij}$. 
By Birkhoff's theorem (e.g., \cite{Brualdi.06}), the extreme points $\ep(\Pi_{N})$ are precisely the permutation matrices; i.e., matrices with binary entries whose rows and columns sum to one. 
Let $\langle A, B \rangle  :={\rm Trace}(A^\top B)=\sum_{i=1}^N \sum_{j=1}^N A_{i,j}B_{i,j} $ be the Frobenius inner product on $\R^{N\times N}$ and $\| \cdot \| $ the associated norm. Then \eqref{QOT2} becomes a particular case of \eqref{QLP}, namely
\begin{align}\label{DQOT}
     \min_{\gamma\in\Gamma_{N}}  \langle C, \gamma \rangle +\frac{N^2}{\eta}\|\gamma\|^2  
     \qquad\text{or equivalently}\qquad
    \min_{\pi\in\Pi_{N}} \frac{1}{N}\langle C, \pi \rangle +\frac{1}{\eta}\|\pi\|^2,
\end{align}
where the factor $N^{2}$ is due to $\mu \otimes \nu$ being the uniform measure on $N^{2}$ points. 
To have the same form as in~\eqref{QLP} and Section~\ref{se:main}, we write~\eqref{DQOT} as
\begin{align}
    \label{BirkoffOT}
    \min_{\pi\in\Pi_{N}} \langle {\bf c}, \pi \rangle +\frac{1}{\eta}\|\pi\|^2  \qquad \text{where }\c_{ij}:=C_{ij}/N.
\end{align}
We can now apply the general results of Theorem~\ref{th:main} to~\eqref{BirkoffOT} and infer the following for the regularized optimal transport problem~\eqref{QOT2}; a detailed proof can be found in  Section~\ref{se:proofs}.

\begin{Proposition}\label{pr:OT}
\begin{enumerate}[label=(\alph*)]
    \item The optimal coupling $\gamma^{\eta}$ of \eqref{QOT2} is optimal for \eqref{OTCONt2} if and only if    \begin{equation}
        \label{etaEstrellaOT}
        \eta \geq \eta^*:=2\, N \cdot \max_{ \pi\in \ep (\Pi_{N})\setminus \cM} \frac{\left\langle  \pi^*,  \pi^*-\pi \right\rangle }{\left\langle C,  \pi-\pi^* \right\rangle },
    \end{equation}
    in which case $\gamma^{\eta}$ is the minimum-norm solution $\gamma^{*}$ of \eqref{OTCONt2}.  
    
    \item We have the following bound for the slope of the suboptimality,
    \begin{multline}
         \label{eq:SlopeOT}
        \limsup_{\eta\to \eta^*} \frac{\int c(x, y) d\gamma^{\eta}(x, y)- \int c(x, y) d\gamma^{*}(x, y)}{\eta^*-\eta}\\
        \leq \frac{1}{2}\left( \int c(x, y)^2 d(\mu\otimes \nu)(x, y)-\left(\int c(x, y) d(\mu\otimes \nu)(x, y)\right)^2\right).
    \end{multline}
\end{enumerate}    
\end{Proposition}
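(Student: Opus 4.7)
The plan is to apply Theorem~\ref{th:main} to the Birkhoff formulation~\eqref{BirkoffOT}, with polytope $\cP = \Pi_N$ and cost $\c = C/N$. First I would verify that the bijection $\pi = N\gamma$ between $\Gamma_N$ and $\Pi_N$ identifies the objective of~\eqref{BirkoffOT} with that of~\eqref{QOT2}, using $\|d\gamma/d(\mu\otimes\nu)\|_{L^2(\mu\otimes\nu)}^2 = N^2\|\gamma\|^2 = \|\pi\|^2$. Under this identification the OT suboptimality coincides with $\mathcal{E}(\eta)$ of~\eqref{BirkoffOT}:
\[
\int c\,d\gamma^\eta - \int c\,d\gamma^* \;=\; \langle C,\gamma^\eta-\gamma^*\rangle \;=\; \langle \c,\pi^\eta-\pi^*\rangle \;=\; \mathcal{E}(\eta).
\]

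For (a), direct substitution into Theorem~\ref{th:main}(a) yields~\eqref{etaEstrellaOT}, the factor $N$ arising from $\c = C/N$. The ``iff'' requires showing that optimality of $\gamma^\eta$ for~\eqref{OTCONt2} (equivalently $\pi^\eta \in \cM$) forces $\pi^\eta = \pi^*$: if $\pi^\eta \in \cM$, then $\langle \c,\pi^\eta\rangle = \langle \c,\pi^*\rangle$, whence $\Phi_\eta(\pi^\eta) \leq \Phi_\eta(\pi^*)$ yields $\|\pi^\eta\|\leq\|\pi^*\|$; by minimality of $\|\pi^*\|$ on $\cM$ this is an equality, and uniqueness of the minimum-norm element forces $\pi^\eta = \pi^*$. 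By Lemma~\ref{le:pieceWiseAffine} this is equivalent to $\eta\geq\eta^*$.

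For (b), Theorem~\ref{th:main}(b) bounds $\mathcal{E}(\eta)/(\eta^*-\eta)$ by $\tfrac{1}{2}\langle\c,(\pi^*-\pi^{\eta_{n-1}})/\|\pi^*-\pi^{\eta_{n-1}}\|\rangle^2$ on the final segment $[\eta_{n-1},\eta^*)$; since $\mathcal{E}$ is affine there, this constant controls the $\limsup$ as $\eta\to\eta^*$. The naive bound $\|\c\|^2/2 = \tfrac{1}{2}\int c^2\,d(\mu\otimes\nu)$ is weaker than~\eqref{eq:SlopeOT}. The key refinement — the main step beyond Theorem~\ref{th:main} — is a centering trick using the structure of $\Pi_N$: since $\Delta := \pi^*-\pi^{\eta_{n-1}}$ is a difference of doubly stochastic matrices, it has zero row and column sums, so $\langle J,\Delta\rangle = 0$ with $J=\mathbf{1}\mathbf{1}^\top$, and $\c$ may be replaced by $\c-\alpha J$ for any $\alpha\in\R$ without affecting $\langle \c,\Delta\rangle$. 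Choosing $\alpha$ to minimize $\|\c-\alpha J\|^2$ yields $\alpha = \tfrac{1}{N}\int c\,d(\mu\otimes\nu)$ and
\[
\|\c-\alpha J\|^2 \;=\; \int c^2\,d(\mu\otimes\nu) - \left(\int c\,d(\mu\otimes\nu)\right)^2,
\]
so Cauchy–Schwarz gives~\eqref{eq:SlopeOT}. The key idea, and the only place where the proof goes beyond a mechanical application of Theorem~\ref{th:main}, is recognizing that the row/column-sum constraints defining the Birkhoff polytope permit this centering and thereby upgrade the generic $\tfrac{1}{2}\|\c\|^2$ bound to the sharper \emph{variance} bound.
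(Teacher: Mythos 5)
Your proof is correct and takes essentially the same route as the paper's: apply Theorem~\ref{th:main} to the Birkhoff formulation~\eqref{BirkoffOT}, and in part~(b) exploit invariance under adding a constant to the cost to upgrade the generic bound $\tfrac12\|\c\|^2$ to the variance. The only (immaterial) difference is the realization of the centering: the paper re-invokes Theorem~\ref{th:main} with the shifted cost $c-m$, whereas you work directly with the angle bound of~\eqref{eq:SlopeBoundInThm} and use that $\pi^*-\pi^{\eta_{n-1}}$ has zero row and column sums, hence is orthogonal to the all-ones matrix $J$; these are the same observation in two guises, and your spelled-out argument for the ``only if'' of part~(a) via uniqueness of the minimum-norm element of $\cM$ is correct and fills in a detail the paper leaves implicit.
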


The following example shows that Proposition~\ref{pr:OT} is sharp.
 
\begin{Example}\label{ex:ExampleOTEasy} Let  $c(\X_i, \Y_j)=-\delta_{ij}$, so that $\pi^{*}=\id$ is the  identity matrix and $C=-\id$. Note also that $\pi^0$ has entries $\pi_{i,j}^0=1/N$. 
It follows from~\eqref{etaEstrellaOT} that $\eta^*=2 N$, and the right-hand side of~\eqref{eq:SlopeOT} evaluates to $\frac{N-1}{2N^{2}}$. We show below that $[0,\eta^{*}]\ni\eta\mapsto \x^{\eta}$ is affine, or more explicitly, that $\pi^\eta= \frac{2N-\eta}{2N}\pi^0 +\frac{\eta}{2N} \pi^*=:\Tilde{\pi}^\eta.$ 
As a consequence, we have for every $ \eta\in [0, \eta^*) $ that
\begin{align*}
    \frac{\int c(x, y) d\gamma^{\eta}(x, y)- \int c(x, y) d\gamma^{*}(x, y)}{\eta_*-\eta}& = \frac{\langle C,  \pi^\eta-\pi^*\rangle }{N(\eta_*-\eta)} 
    =-\frac{ (2N-\eta)+ (\eta-2N)N }{2N^2(\eta_*-\eta)}\\
    &=- \frac{ (\eta^*-\eta)+ (\eta-\eta^*)N }{2N^2(\eta_*-\eta)}
    =\frac{ N-1 }{2N^2},
\end{align*}
matching the right-hand side of~\eqref{eq:SlopeOT}.

It remains to show that $\pi^\eta=\Tilde{\pi}^\eta$. Using $\c= \id/N$, the definition of $\Tilde{\pi}^\eta$ and $\pi^{*}=\id$, we see that 
$ \frac{\eta \c}{2} + \Tilde{\pi}^\eta = \frac{2N-\eta}{2N}\pi^0.$ The form of $\pi^0$ also implies that $\left\langle \pi^{0},  \pi'-\pi\right\rangle =0 $ for any $\pi,\pi'\in \Pi_N$. Together, it follows that 
$
  \left\langle -\frac{\eta \c}{2} - \Tilde{\pi}^\eta,  \Tilde{\pi}^\eta -\pi\right\rangle =0 $
for all $\pi\in \Pi_N$. By Lemma~\ref{Lemma:projection}, this implies $\Tilde{\pi}^\eta=\pi^\eta$.
\end{Example}

Next, we focus on a more representative class of transport problems. Our main interest is to see how our key quantities scale with $N$, the number of data points.

\begin{Corollary}
   \label{Coro:OTseparated}
   Assume that there exist $[\epsilon_m, \epsilon_M] \subset [0,\infty) $ and a permutation $\sigma^*: \{1, \dots, N\}\to \{1, \dots, N\}$  such that 
    $$ \kappa:=\min_{i\in \{1, \dots, N\}, j\neq \sigma^*(i)}c(\X_i, \Y_j)>\epsilon_M\quad \text{and}\quad c(\X_i, \Y_{\sigma^*(i)})\in [\epsilon_m, \epsilon_M]  \quad \text{for all $i\in \{1, \dots, n\}$}.$$
      Then 
      \begin{equation}\label{eq:OTseparatedBounds}
      \frac{4\,N}{\kappa'-2\epsilon_m} \leq \eta^*\leq \frac{2\,N}{\kappa-\epsilon_M},
      \end{equation}
     where $\kappa':= \min_{i\neq j} c(\X_i, \Y_{\sigma^*(j)})+ c(\X_{j},\Y_{\sigma^*(i)}).$
    If the cost is symmetric around $\sigma^*$ in the sense that $c(\X_i, \Y_{\sigma^*(j)})= c(\X_{j},\Y_{\sigma^*(i)})$ for all $i,j\in \{1, \dots, N\} $, then 
    \begin{equation}\label{eq:OTseparatedBoundsSym}
  \frac{2\,N}{\kappa-\epsilon_m} \leq   \eta^*\leq  \frac{2\,N}{\kappa-\epsilon_M}, \qquad 
    \text{and in particular}\quad\eta^*=\frac{2\,N}{\kappa}\quad
    \text{if}\quad\epsilon_m=\epsilon_M=0.
    \end{equation}
\end{Corollary}

The proof is detailed in Section~\ref{se:proofs}.
We illustrate Proposition~\ref{pr:OT} and Corollary~\ref{Coro:OTseparated} with a representative example for scalar data.

\begin{Example}\label{ex:oneoverNOT}
Consider the quadratic cost $c(x, y)=\|x-y\|^2$ and $\X_{i}=\Y_{i}=\frac{i}{N}$, $1\leq i\leq N$ with $N\geq2$, leading to the cost matrix
$$
  C_{ij}=\frac{|i-j|^{2}}{N^{2}}.
$$
Then
$$
  \eta^{*} = 2 N^3
$$
and we have the following bound for the slope of the suboptimality,
\begin{equation}
\label{upperBoundforExample}
    \limsup_{\eta\to \eta^*} \frac{\int c(x, y) d\gamma^{\eta}(x, y)- \int c(x, y) d\gamma^{*}(x, y)}{\eta^*-\eta} \leq  \frac{N-1}{ N^6}.
\end{equation}
\end{Example} 

Indeed, the value of $\eta^{*}$ follows directly from the last part of~\eqref{eq:OTseparatedBoundsSym} with $\kappa=1/N^2$ and $\sigma^*$ being the identity. The proof of~\eqref{upperBoundforExample} is longer and relegated to Section~\ref{se:proofs}.

To study the accuracy of the bound \eqref{upperBoundforExample}, we compute numerically the limit 
$$ L_N= \lim_{\eta\to \eta^*} \frac{\int c(x, y) d\gamma^{\eta}(x, y)- \int c(x, y) d\gamma^{*}(x, y)}{\eta^*-\eta} $$
for \(N = j*30 \) with $j=2, \dots, 16$. Figure~\ref{fig:BoundSlope} shows $N\mapsto L_N$ in blue and the upper bound $N\mapsto \frac{N-1}{ N^6}$ in red (in double logarithmic scale). We observe that both have the same order as a function of \(N\).

\begin{figure}[h!]
    \centering
		\resizebox{.7\linewidth}{!}{%

\tikzset{every picture/.style={line width=0.75pt}} %

\begin{tikzpicture}[x=0.75pt,y=0.75pt,yscale=-1,xscale=1, scale=1]
\draw (330.5,114.9) node  {\includegraphics[width=362.25pt,height=217.35pt]{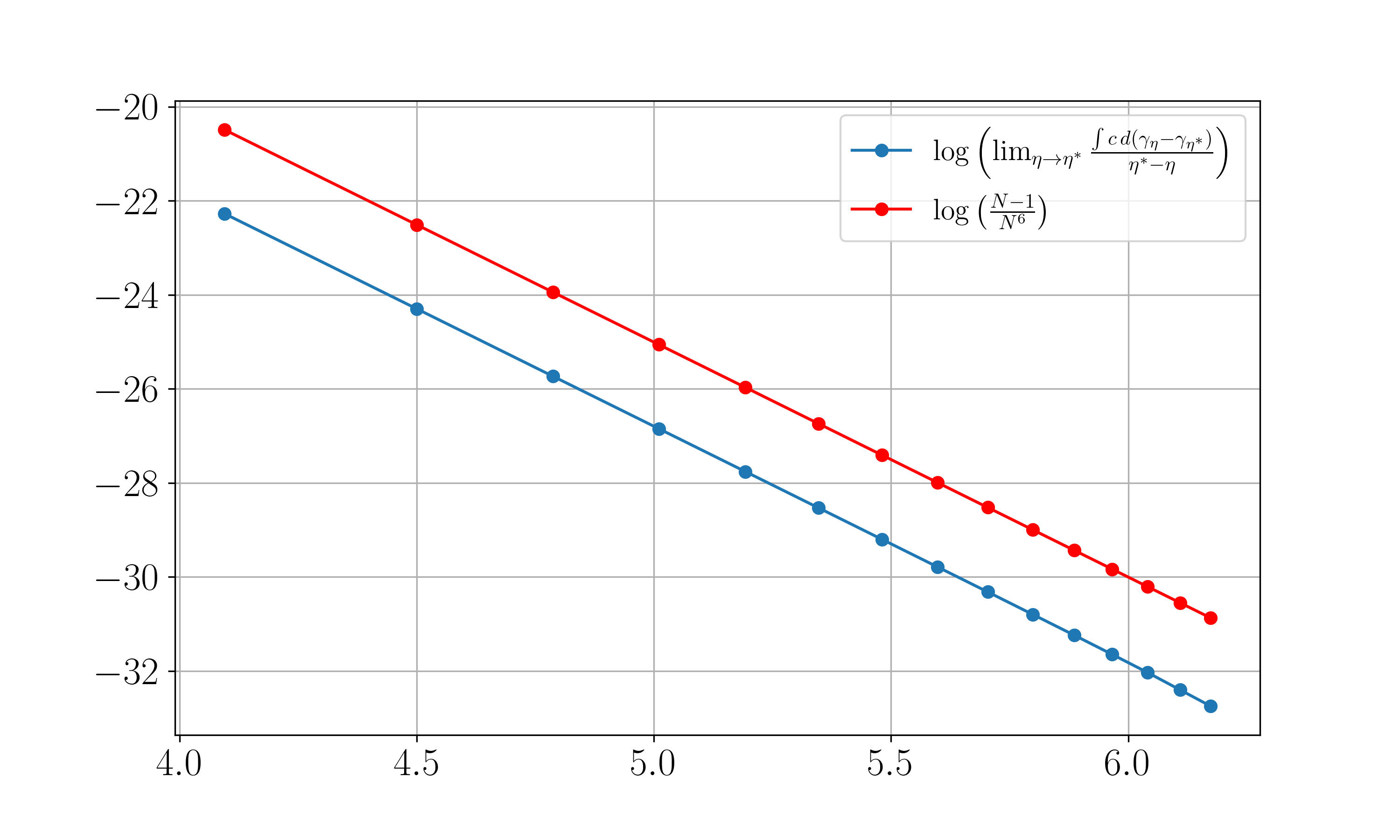}};

\draw (322,242.4) node [anchor=north west][inner sep=0.75pt]    {$\log( N)$};

\end{tikzpicture}
}%
 \caption{Accuracy of the bound~\eqref{upperBoundforExample}. Plot of \(N \mapsto \lim_{\eta\to \eta^*} \frac{\int c(x, y) d\gamma^{\eta}(x, y)- \int c(x, y) d\gamma^{*}(x, y)}{\eta^*-\eta}\) (blue)  and the upper bound \(N \mapsto \frac{N-1}{N^6}\) (red) in double logarithmic scale.
}
    \label{fig:BoundSlope}
\end{figure}

\section{Proofs}\label{se:proofs}

\begin{proof}[Proof of Lemma~\ref{le:hyperbolicDecay}] 
Let $\x\in\cP$. Inserting the definition of $\Phi_\eta$, expanding $\|\x-  \x^\eta \|^2$, and applying Lemma~\ref{Lemma:projection} yield
\begin{align*}
    \Phi_\eta (\x) & = \Phi_\eta (\x^\eta)+ \left\langle \c+ \frac{ 2\x^\eta}{\eta} , \x-  \x^\eta \right\rangle +\frac{ \|\x-  \x^\eta \|^2 }{\eta}\geq \Phi_\eta (\x^\eta)+ \frac{ \|\x-  \x^\eta \|^2 }{\eta}.
\end{align*}
Therefore, 
\begin{align*}
    0 &\geq  \Phi_\eta (\x^\eta)-  \Phi_\eta (\x) + \frac{ \|\x-  \x^\eta \|^2 }{\eta}= \langle \c,  \x^\eta-\x\rangle + \frac{ \|\x^\eta\|^2- \|\x\|^2+  \|\x-  \x^\eta \|^2 }{\eta}%
\end{align*} 
and in particular choosing $\x=\x^{*}$ gives
$$  \mathcal{E}(\eta)= \langle \c,  \x^\eta-\x^*\rangle\leq \frac{ \|\x^*\|^2-\|\x^\eta\|^2- \|\x^*-  \x^\eta \|^2  }{\eta}$$
as claimed.
\end{proof}

\begin{proof}[Proof of Lemma~\ref{le:pieceWiseAffine} and Theorem~\ref{th:main}]   \emph{Step 1.} Let $\eta_{(1)} < \eta_{(2)}$. We claim that if $\x^{\eta_{(1)}}, \x^{\eta_{(2)}} \in \text{{\rm ri}}(\mathcal{F})$ for some face\footnote{A nonempty face \(\mathcal{F}\) of the polytope \(\mathcal{P}\) can be defined as a subset  \(\mathcal{F}\subset\mathcal{P}\) such that there exists  an affine hyperplane $H=\{ \x\in \R^d: \langle \x, {\bf a} \rangle=m \} $ with $H\cap \mathcal{P}=\mathcal{F} $ and $ \mathcal{P}\subset  \{ \x\in \R^d: \langle \x, {\bf a} \rangle\leq m \}$. See \cite{Brondsted.83}.
} $\mathcal{F}$ of $\mathcal{P}$, then $[\eta_{(1)}, \eta_{(2)}] \ni \eta \mapsto \x^{\eta}$ is affine. Indeed, $\x^{\eta_{(i)}}=\proj_{\cP}(-\eta_{(i)} \c/2)$ is the projection of $-\eta_{(i)} \c/2$ onto $\cP$. As $\x^{\eta_{(i)}}\in \text{{\rm ri}}(\mathcal{F})$, it follows that $\x^{\eta_{(i)}}=\proj_{A}(-\eta_{(i)} \c/2)$ is also the projection onto the affine hull~$A$ of $\cF$. Since $A$ is an affine space, the map $\eta\mapsto\proj_{A}(-\eta\c/2)$ is affine. For $\eta_{(1)}\leq \eta\leq  \eta_{(2)}$, convexity of $\text{{\rm ri}}(\mathcal{F})$ then implies $\proj_{A}(-\eta\c/2)\in \text{{\rm ri}}(\mathcal{F})$ , which in turn implies $\proj_{A}(-\eta\c/2)=\proj_{\cF}(-\eta\c/2)=\proj_{\cP}(-\eta\c/2)=\x^{\eta}$.

\emph{Step 2.} We can now define $\eta_1, \dots, \eta_{n}$ recursively as follows. Recall first that each $\x\in\cP$ is in the relative interior of exactly one face of~$\cP$ (possibly~$\cP$ itself), namely the smallest face containing~$\x$ \cite[Theorem~5.6]{Brondsted.83}. Let $\mathcal{F}_0$ be the unique face such that $ \x^0:=\argmin_{\x\in\cP}\|\x\|\in {\rm ri}(\mathcal{F}_0)$ and define 
$$
\eta_1 := 
    \inf\{ \eta > 0: \ \x^\eta \notin  \text{{\rm ri}}(\mathcal{F}_0) \},
$$
where we use the convention that $\inf \emptyset  =+\infty$. Then  $(0, \eta_{1}) \ni \eta \mapsto \x^\eta$ is affine by Step~1. For $i>1$, if $\eta_{i-1} < \infty$, let $\mathcal{F}_{i-1}$ be the face such that $\x^{\eta_{i-1}}\in \text{{\rm ri}}(\mathcal{F}_{i-1})$ and define 
$$
\eta_{i} := 
    \inf\{ \eta > \eta_{i-1}: \ \x^\eta \notin \text{{\rm ri}}(\mathcal{F}_{i-1}) \}.
$$
Again, $(\eta_{i-1}, \eta_{i}) \ni \eta \mapsto \x^\eta$ is affine by Step~1. Moreover, by continuity, $[\eta_{i-1}, \eta_{i}] \ni \eta \mapsto \x^\eta$ is also affine. 

\emph{Step 3.} Next, we establish the value~\eqref{eq:critEta} of $\eta^{*}$. Let us first observe that~\eqref{eq:critEta} is strictly positive. Indeed, the denominator is clearly positive. Suppose that the numerator $\left\langle \x^*,  \x-\x^* \right\rangle \leq 0$ for all $\x\in  \ep (\mathcal{P})\setminus \cM$. Note that by the definition of $\x^*$, we also have $\left\langle \x^*,  \x-\x^* \right\rangle \leq 0$ for all $\x\in  \cM$. Thus $\left\langle \x^*,  \x-\x^* \right\rangle \leq 0$ for all $\x\in  \ep (\mathcal{P})$, meaning that $\x^*$ is the projection of the origin onto $\mathcal{P}$, the degenerate situation we had excluded in our setup.

Let $\eta >0$ and suppose that 
$\x^*=\x^{\eta}$. Then by Lemma~\ref{Lemma:projection},
    \begin{equation*}
    \label{caracterizedXstar}
    -\left\langle \x^*,  \x-\x^* \right\rangle \leq \left\langle \frac{\eta \c}{2},  \x-\x^* \right\rangle   \quad \text{for all } \x\in \mathcal{P}  .
    \end{equation*}
Using also that $\left\langle {\c},  \x-\x^* \right\rangle> 0$ for $\x\in \mathcal{P} \setminus \cM $, we deduce
 \begin{equation}
     \label{toreverse}
     \eta \geq 2\frac{\left\langle \x^*,  \x^*-\x \right\rangle}{\left\langle {\c},  \x-\x^* \right\rangle}  \quad \text{for all } \x\in  \ep (\mathcal{P})\setminus \cM .   
 \end{equation}
Conversely, assume that \eqref{toreverse} holds; we show that $\x^*=\x^{\eta}$. Recall that $\ep (\mathcal{P})= \{\v_1, \dots, \v_{K}\} $ denotes the set of extreme points of $\mathcal{P}$. Let  $\x\in \mathcal{P}$, then there exist $\{\lambda_i\}_{i=1}^K \subset [0,1]$ with $1=\sum_{i=1}^K \lambda_i$ such that $\x=\sum_{i=1}^K \lambda_i\v_i $. 
We note that \eqref{toreverse} yields
$$  \left\langle \frac{\eta \c}{2},  \x-\x^* \right\rangle = \sum_{i:\, \v_i\in \ep(\cP)\setminus \ep(\cM)}\lambda_i \left\langle \frac{\eta \c}{2},  \v_i-\x^* \right\rangle \geq -\sum_{i:\, \v_i\in \ep(\cP)\setminus \ep(\cM)}\lambda_i \left\langle \x^*,  \v_i-\x^* \right\rangle .  $$ 
On the other hand, the fact that $\x^*$ is the projection of the origin onto $\cM$ yields
$$ \sum_{i:\, \v_i\in \ep(\cM)} \lambda_i \left\langle \x^*,  \v_i-\x^* \right\rangle \geq 0.$$
Together,
\begin{align*}
    \left\langle \frac{\eta \c}{2},  \x-\x^* \right\rangle &\geq -\sum_{i:\, \v_i\in \ep(\cP)\setminus \ep(\cM)}\lambda_i \left\langle \x^*,  \v_i-\x^* \right\rangle \geq -\sum_{i:\, \v_i\in \ep(\cP)}\lambda_i \left\langle \x^*,  \v_i-\x^* \right\rangle 
    =  -\left\langle \x^*,  \x-\x^* \right\rangle .
\end{align*}
As $\x\in \mathcal{P}$ was arbitrary, Lemma~\ref{Lemma:projection} now shows that $\x^*=\x^{\eta}$. This completes the proof of Lemma~\ref{le:pieceWiseAffine} and~\eqref{eq:critEta}.

Finally, note that $\x$ attains the maximum in~\eqref{eq:critEta} if and only if $\langle \c^{*},  \x-\x^* \rangle=0$. Moreover, $\langle \c^{*},  \x-\x^* \rangle\geq0$ for all $ \x\in\cP$ by Lemma~\ref{Lemma:projection}. Hence the set of maximizers of~\eqref{eq:critEta} over $\ep (\mathcal{P})\setminus \cM$ equals the set of minimizers of $\langle \c^{*},  \cdot \rangle$ over $\ep(\mathcal{P})$.
        
\emph{Step 4.} We prove the remaining claim in~\ref{CriticalPoint}, namely that  $\x^{\eta}\in  \cM (\mathcal{P}, \c^*)$ for all  $\eta\in [\eta_{n-1},\eta^*]$. By Lemma~\ref{Lemma:projection}, 
$$ \left\langle -\frac{\eta \c}{2}-\x^{\eta},  \x-\x^{\eta} \right\rangle \leq 0   \quad \text{for all } \x\in \mathcal{P}, \ \eta\in [\eta_{n-1}, \eta^*].$$
As
$\x^{\eta}\in {\rm ri}([\x^{\eta_{n-1}},\x^*] )$ for $ \eta\in (\eta_{n-1}, \eta^*)$, Lemma~\ref{Lemma:projection} moreover yields
$$ \left\langle -\frac{\eta \c}{2}-\x^{\eta},  \x^{\eta'}-\x^{\eta} \right\rangle =0   \quad \text{for all } \eta'\in  [\eta_{n-1}, \eta^*], \  \eta\in (\eta_{n-1}, \eta^*), $$
and by continuity, the previous display also holds for $\eta\in [\eta_{n-1}, \eta^*]$. In summary, we have  
\begin{equation}\label{eq:step4}
\left\langle -\frac{\eta^* \c}{2}-\x^*,  \x-\x^* \right\rangle \leq 0 \quad\text{for all } \x\in \mathcal{P}
\end{equation}
and 
$$ \left\langle -\frac{\eta^* \c}{2}-\x^*,  \x^{\eta_{n-1}}-\x^* \right\rangle =0    . $$
Therefore, $\x^{\eta_{n-1}}\in \cM (\mathcal{P}, \c^*)$. On the other hand, \eqref{eq:step4} also states that $\x^{\eta^{*}}=\x^{*}\in \cM (\mathcal{P}, \c^*)$, and then convexity implies the claim. %

\emph{Step 5.} It remains to prove \ref{SlopeBoundInThm}. Let $\eta\in (\eta_{n-1}, \eta^*)$. Then Lemma~\ref{le:pieceWiseAffine} implies that $\x^\eta= \lambda \x^{\eta_{n-1}} + (1-\lambda) \x^* $ for some $\lambda\in (0, 1)$ and thus 
$$ \langle \c , \x^\eta \rangle = \langle \c , \x^* \rangle +\lambda \langle \c, \x^{\eta_{n-1}}-\x^* \rangle.$$
Lemma~\ref{le:hyperbolicDecay} then yields 
$$ \lambda =  \frac{ \langle \c , \x^\eta - \x^* \rangle  }{\langle \c, \x^{\eta_{n-1}}-\x^* \rangle} \leq \frac{ \|\x^*\|^2-\|\x^\eta\|^2- \|\x^*-  \x^\eta \|^2    }{\eta \langle \c, \x^{\eta_{n-1}}-\x^* \rangle}.   $$
Using 
$$
    \|\x^\eta\|^2 = \|\x^*\|^2+\lambda^2 \|\x^*- \x^{\eta_{n-1}}\|^2+ 2\lambda \langle \x^*, \x^{\eta_{n-1}}- \x^*\rangle   
$$
and 
$
    \|\x^\eta-\x^*\|^2 = \lambda^2 \|   \x^*-  \x^{\eta_{n-1}} \|^2 ,
$ 
it follows that
$$ \lambda  \leq \frac{ 2\lambda \langle \x^*,  \x^*-\x^{\eta_{n-1}} \rangle-2\lambda^2 \|\x^*- \x^{\eta_{n-1}}\|^2   }{\eta \langle \c, \x^{\eta_{n-1}}-\x^* \rangle}.   $$ 
and hence
\begin{align}\label{eq:stepToSlope}
    \lambda &\leq  \frac{2\langle \x^*,  \x^*-\x^{\eta_{n-1}} \rangle-\eta \langle \c, \x^{\eta_{n-1}}-\x^* \rangle}{2 \|\x^*- \x^{\eta_{n-1}}\|^2}.
\end{align}
By the last part of \ref{CriticalPoint} we have
\begin{align*}
\eta^* =  \frac{2\left\langle  \x^*,  \x^*-\x^{\eta_{n-1}} \right\rangle}{\left\langle {\c},  \x^{\eta_{n-1}}-\x^* \right\rangle}.
\end{align*} 
Inserting this in~\eqref{eq:stepToSlope} yields
\begin{align*}
    \lambda  \leq   \frac{(\eta^*-\eta) \langle \c, \x^{\eta_{n-1}}-\x^* \rangle }{2 \|\x^*- \x^{\eta_{n-1}}\|^2} 
\end{align*}
and now it follows that
$$ \mathcal{E}(\eta)= \lambda \langle  \c, \x^{\eta_{n-1}}-\x^* \rangle \leq \frac{(\eta^*-\eta) \langle \c, \x^{\eta_{n-1}}-\x^* \rangle ^2 }{2 \|\x^*- \x^{\eta_{n-1}}\|^2} $$
as claimed.
\end{proof}

\begin{proof}[Proof of Proposition~\ref{pr:largeRegularization}]
    Recall that $\x^\eta$ is the projection of $-\eta \c/2$ onto~$\cP$ whereas $\x^0$ is the projection of the origin onto~$\cP$. As the projection operator onto a convex set is non-expanding (i.e., Lipschitz continuous with constant one), this implies $\| \x^{\eta}-\x^0\|\leq \|-\eta \c/2\|=\frac{1}{2} \|\c\| \eta$.
\end{proof}

\begin{proof}[Proof of Proposition~\ref{pr:OT}.]
  Theorem~\ref{th:main}\ref{CriticalPoint} directly yields \eqref{etaEstrellaOT}. Whereas for~\eqref{eq:SlopeOT}, a direct application of Theorem~\ref{th:main}\ref{SlopeBoundInThm} only yields
  $$ \limsup_{\eta\to \eta^*} \frac{\int c(x, y) d\gamma^{\eta}(x, y)- \int c(x, y) d\gamma^{*}(x, y)}{\eta^*-\eta} \leq \frac{1}{2}\int c(x, y)^2 d(\mu\otimes \nu)(x, y). $$
  To improve this bound, note that the optimizer of \eqref{QOT2} does not change if the cost $c(x, y)$ is changed by an additive constant. Moreover, for any $m\in \R $,
    \begin{multline*}
      {\int c(x, y) d\gamma^{\eta}(x, y)- \int c(x, y) d\gamma^{*}(x, y)} 
       ={\int (c(x, y)-m) d\gamma^{\eta}(x, y)- \int (c(x, y)-m) d\gamma^{*}(x, y)}. 
  \end{multline*}
Applying Theorem~\ref{th:main} with the modified cost $c(x, y)-m$ for the choice $m:=\int c(x, y) d(\mu\otimes \nu)(x, y)$ yields \eqref{eq:SlopeOT}. 
\end{proof}

\begin{proof}[Proof of Corollary~\ref{Coro:OTseparated}]
Assume without loss of generality that $ \sigma^*$ is the identity, so that  $\pi^*={\rm Id}$ is the identity matrix.  
    Let $P_\sigma$ be the permutation matrix associated with a permutation $\sigma: \{1, \dots, N\}\to \{1, \dots, N\}$. We define 
    $\mathcal{N}(\sigma)=\{ i\in \{1, \dots, N\}:\ \sigma(i)=i   \}.$ Then 
    \begin{equation}\label{eq:OTseparatedProof}
    \frac{\left\langle  \pi^*,  \pi^*-P_\sigma \right\rangle }{\left\langle C,  P_\sigma-\pi^* \right\rangle } = \frac{N-\vert \mathcal{N}(\sigma)\vert  }{{\sum_{i\notin \mathcal{N}(\sigma)} c(\X_i, \Y_{\sigma(i)})-c(\X_i, \Y_{i})} },
    \end{equation}
    where $\vert \mathcal{N}(\sigma)\vert$ denotes the cardinality of $\mathcal{N}(\sigma)$. 
    
    For the upper bound in~\eqref{eq:OTseparatedBounds}, we recall that $c(\X_i, \Y_{i})\leq \epsilon_M$ and $c(\X_i, \Y_{\sigma(i)})\geq\kappa$ for $i\notin \mathcal{N}(\sigma)$, so that~\eqref{eq:OTseparatedProof} yields
    $$ \frac{\left\langle  \pi^*,  \pi^*-P_\sigma \right\rangle }{\left\langle C,  P_\sigma-\pi^* \right\rangle } \leq  \frac{1 }{\kappa-\epsilon_M} \frac{N-\vert \mathcal{N}(\sigma)\vert  }{N-\vert \mathcal{N}(\sigma)\vert  }=\frac{1 }{\kappa-\epsilon_M}. $$
    Now Proposition~\ref{pr:OT} yields the claim.     For the lower bound in~\eqref{eq:OTseparatedBounds}, let $i^*,j^*\neq \sigma^*(i^*)$ be such that  $\kappa'=c(\X_{i^*},\Y_{j^*})+c(\X_{j^*}, \Y_{i^*})$ and let $\sigma$ be the permutation such that   $\sigma(i)=i$ for all $i\notin \{i^*, j^*\}$,  $\sigma(i^*)=j^*$ and $\sigma(j^*)=i^*$. Then 
    $$ \frac{\left\langle  \pi^*,  \pi^*-P_\sigma \right\rangle }{\left\langle C,  P_\sigma-\pi^* \right\rangle }=  \frac{2}{ c(\X_{i^*}, \Y_{j^*}) +c(\X_{j^*}, \Y_{i^*})- (c(\X_{i^*}, \Y_{i^*}) + c(\X_{j^*}, \Y_{j^*}))}\geq  \frac{2}{  \kappa'-2\epsilon_m}$$
    and Proposition~\ref{pr:OT} again yields the claim. It remains to observe that $\kappa'=2\kappa$ when the cost is symmetric. 
\end{proof}

\begin{proof}[Proof for Example~\ref{ex:oneoverNOT}]
Corollary~\ref{Coro:OTseparated} applies with \(\sigma^*\) being the identity and \(\kappa=1/N^2\). As a consequence, the  critical value \(\eta^*\) is $2 N^3.$

To prove~\eqref{upperBoundforExample}, write $\pi^{\eta_{n-1}}=  \sum_{i=1}^k \lambda_i P_{\sigma_i}$ with $\lambda_i\in (0,1]$  and $\sum_{i=1}^k \lambda_i=1$. Recall from Theorem~\ref{th:main}\ref{CriticalPoint} that $0=\left\langle \c^*, \pi^{\eta_{n-1}} -\pi^* \right\rangle $. With the optimality of $\pi^{*}=\pi^{\eta^{*}}$ for $\left\langle \c^*, \cdot\right\rangle$, this implies
$$ 0=\left\langle \c^*, P_{\sigma_i} -\pi^* \right\rangle =\left\langle \frac{\eta^*C}{2\, N}+\pi^*, P_{\sigma_i} -\pi^* \right\rangle =\left\langle {N^2C}+\pi^*, P_{\sigma_i} -\pi^* \right\rangle  \quad \text{for all } i=1, \dots, k.$$
As $\langle {N^2C}+\pi^*,\pi^*\rangle = \langle N^2C + \id ,\id\rangle = N$, it follows that  
$ \left\langle {N^2C} + \id, P_{\sigma_i} \right\rangle=N $. Using that $P_{\sigma_i}$ has~$N$ entries equal to one and that the entries of $N^2C + \id$ are strictly larger than one outside the three principal diagonals, this implies that $|\sigma_i(j)-j|\leq 1$ for all $j\in \{1, \dots, N\}$.  As a consequence,  
$\pi^{\eta_{n-1}}=  \sum_{i=1}^k \lambda_i P_{\sigma_i}$ vanishes outside the three principal diagonals; i.e., it is entry-wise smaller or equal to the tridiagonal matrix 
$$ A=\begin{pmatrix}
1 & 1 & 0 & 0 & \cdots & 0 & 0 \\
1 & 1 & 1 & 0 & \cdots & 0 & 0 \\
0 & 1 & 1 & 1 & \cdots & 0 & 0 \\
0 & 0 & 1 & 1 & \cdots & 0 & 0 \\
\vdots & \vdots & \vdots & \vdots & \ddots & \vdots & \vdots \\
0 & 0 & 0 & 0 & \cdots & 1 & 1 \\
0 & 0 & 0 & 0 & \cdots & 1 & 1 \\
\end{pmatrix}. $$
Let $\bar\c:=A\odot\c$ be the entry-wise product, meaning that entries of $\c$ outside the three principal diagonals are set to zero. As $\pi^{\eta_{n-1}}-{\rm Id}$ vanishes outside those diagonals, we have 
$$\langle \pi^{\eta_{n-1}}-{\rm Id}, \c\rangle  = \langle \pi^{\eta_{n-1}}-{\rm Id}, \bar{\c}\rangle .
$$
We can now use Theorem~\ref{th:main}\ref{SlopeBoundInThm} and the Cauchy--Schwarz inequality to find 
\begin{align*}
 \limsup_{\eta\to \eta^*} \frac{ \langle \pi^{\eta}-{\rm Id}, \c\rangle  }{(\eta^* -\eta) }
\leq  \frac{\langle \pi^{\eta_{n-1}}-{\rm Id}, \c\rangle^2 }{2 \|\pi^{\eta_{n-1}}-{\rm Id}\|^2 }
 & = \frac{\langle \pi^{\eta_{n-1}}-{\rm Id}, \bar{\c}\rangle^2 }{2 \|\pi^{\eta_{n-1}}-{\rm Id}\|^2 } 
 \leq \frac{\|\bar\c\| ^{2}}{2} = \frac{1}{2N^{2}} \frac{2(N-1)}{N^{4}} =\frac{N-1}{ N^6}
\end{align*} 
as claimed in~\eqref{upperBoundforExample}.
\end{proof} 

\paragraph{Statements and Declarations} M.\ Nutz was partially supported by NSF Grants DMS-1812661, DMS-2106056, DMS-2407074. The authors have no relevant financial or non-financial interests to disclose. All authors have contributed to all parts of the paper. %

\newcommand{\dummy}[1]{}

\end{document}